\title{Nontrivial lower bounds for the $p$-adic valuations of some type of rational numbers and an application for establishing the integrality of some rational sequences}
\author{\sc Bakir FARHI \\
National Higher School of Mathematics, \\
P.O.Box 75, Mahelma 16093, Sidi Abdellah (Algiers), Algeria \\[1mm]
\href{mailto:bakir.farhi@nhsm.edu.dz}{bakir.farhi@nhsm.edu.dz} \\[1mm]
\url{http://farhi.bakir.free.fr/}
}
\date{}
\let\up=\textsuperscript
\def\R{{\mathbb R}}
\def\C{{\mathbb C}}
\def\N{{\mathbb N}}
\def\Z{{\mathbb Z}}
\def\lcm{\mathrm{lcm}}
\def\dilog{\mathrm{Li}_2}
\def\restmod#1#2{#1\ (\mathrm{mod}\ #2)} 
\def\idem{\leavevmode\hbox to 10.6mm{\vrule height .63ex depth -.59ex
    width 10mm\hfill}}
\theoremstyle{plain}
\numberwithin{equation}{section}
\newtheorem{thm}{Theorem}[section]
\newtheorem{lemma}[thm]{Lemma}
\newtheorem{coll}[thm]{Corollary}
\theoremstyle{definition}
\theoremstyle{remark}
\newtheorem{rmks}[thm]{Remarks}
\begin{document}
\maketitle

\begin{abstract}
In this note, basing on a certain functional equation of the dilogarithm function, we establish nontrivial lower bounds for the $p$-adic valuation (where $p$ is a given prime number) of some type of rational numbers involving harmonic numbers. Then we use our estimate to derive the integrality of some sequences of rational numbers, which cannot be seen directly from their definitions.         
\end{abstract}

\noindent\textbf{MSC 2010:} Primary 11B83; Secondary 11A41, 33B30. \\
\textbf{Keywords:} $p$-adic valuations, integer sequences, the dilogarithm function.

\section{Introduction and Notation}\label{sec1}

Throughout this paper, we let $\N$ denote the set of positive integers and $\N_0 := \N \cup \{0\}$ the set of nonnegative integers. For $x \in \R$, we let $\lfloor x\rfloor$ denote the integer part of $x$. For a given prime number $p$ and a given nonzero rational number $r$, we let $\vartheta_p(r)$ denote the usual $p$-adic valuation of $r$; if in addition $r$ is positive then we let $\log_p(r)$ denote its logarithm to the base $p$ (i.e., $\log_p(r) := \frac{\log{r}}{\log{p}}$). For a given prime number $p$ and a given positive integer $n$, we let $s_p(n)$ denote the sum of base-$p$ digits of $n$. Next, the least common multiple of given positive integers $u_1 , u_2 , \dots , u_n$ ($n \in \N$) is denoted by $\lcm(u_1 , u_2 , \dots , u_n)$. In some places of this paper, we need to use the well-known formulas:
\begin{align}
\vartheta_p\left(\lcm(1 , 2 , \dots , n)\right) & = \left\lfloor\log_p(n)\right\rfloor , \label{eq3} \\[2mm]
\vartheta_p(n!) & = \frac{n - s_p(n)}{p - 1} \label{eq4}
\end{align}
(which are valid for any prime $p$ and any positive integer $n$). Note that only the second one is nontrivial; it is known as \textit{the Legendre formula}, a proof of which can be found in \cite[Theorem 2.6.4, page 77]{moll}). Furthermore, we let ${(H_n)}_{n \in \N_0}$ denote the sequence of harmonic numbers, defined by $H_0 = 0$ and $H_n := \frac{1}{1} + \frac{1}{2} + \dots + \frac{1}{n}$ ($\forall n \in \N$). Finally, we let $\dilog$ denote the dilogarithm function, defined by:
$$
\dilog(X) := \sum_{n = 1}^{+ \infty} \frac{X^n}{n^2} ~~~~~~~~~~ (\forall X \in \C , |X| \leq 1) .
$$
It is known (see e.g., \cite{lew}) that $\dilog$ satisfies the functional equation:
\begin{equation}\label{eq1}
\dilog(X) + \dilog\left(\frac{X}{X - 1}\right) = - \frac{1}{2} \log^2(1 - X)
\end{equation}
(for $X$ in the neighborhood of $0$).

Very recently, the author \cite{far3} have obtained, by different methods, lower bounds for the $p$-adic valuations of the rational numbers of the form $\sum_{k = 1}^{n} \left(\frac{1}{a^k} + \frac{1}{(p - a)^k}\right) \frac{p^k}{k}$ ($n \in \N$, $p$ a prime, $a \in \Z$ with $a \not\equiv \restmod{0}{p}$) (generalizing the earlier results of \cite{far1,far2,dub} which uniquely concern the case $p = 2$). One of these methods exploits the functional equation (analogue to \eqref{eq1}): $\mathrm{Li}_1(X) + \mathrm{Li}_1\left(\frac{X}{X - 1}\right) = 0$ (where $\mathrm{Li}_1(X) := \sum_{n = 1}^{+ \infty} \frac{X^n}{n} = - \log(1 - X)$, for all $X \in \C$ with $|X| < 1$). Following the same method, that we adapt to the function $\dilog$ and its functional equation \eqref{eq1}, we will establish nontrivial lower bounds for the $p$-adic valuations of the rational numbers of the form
$$
\sum_{k = 1}^{n} \left(\frac{1}{a^k} + \frac{1}{(p - a)^k} + \frac{k H_{k - 1}}{a^k}\right) \frac{p^k}{k^2}
$$
($n \in \N$, $p$ a prime, $a \in \Z$ with $a \not\equiv \restmod{0}{p}$). Then by specializing $(p , a)$ to $(2 , 1)$, we derive the integrality of the sequence of general term
$$
\frac{n!^2}{4^n} \sum_{k = 1}^{n} \left(2 + k H_{k - 1}\right) \frac{2^k}{k^2} ~~~~~~~~~~ (n \in \N \setminus \{3 , 5 , 7\})
$$
(a fact which cannot be seen directly from the last expression). We conclude the note by deducing the integrality of another sequence of rational numbers (related to the preceding) and by some general remarks.

\section{The results and the proofs}

Our main result is the following:

\begin{thm}\label{t1}
Let $p$ be a prime number and $a$ be an integer not multiple of $p$. Then we have for all positive integer $n$:
$$
\vartheta_p\left(\sum_{k = 1}^{n} \left(\frac{1}{a^k} + \frac{1}{(p - a)^k} + \frac{k H_{k - 1}}{a^k}\right) \frac{p^k}{k^2}\right) \geq n + 1 - 2 \left\lfloor\log_p(n)\right\rfloor .
$$
\end{thm}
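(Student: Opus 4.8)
The plan is to collapse the whole quantity into a single power series in the variable $x := \frac{p}{a}$ and then read off its $p$-adic valuation directly. Write also $y := \frac{p}{p - a}$; since $a \not\equiv \restmod{0}{p}$ we also have $p - a \not\equiv \restmod{0}{p}$, so $\vartheta_p(x) = \vartheta_p(y) = 1$, and moreover $y = \frac{x}{x - 1} = \frac{-x}{1 - x}$. Using $\frac{1}{k^2} + \frac{H_{k - 1}}{k} = \frac{H_k}{k}$ together with $\frac{p^k}{a^k} = x^k$ and $\frac{p^k}{(p - a)^k} = y^k$, the rational number to be bounded is
$$S_n := \sum_{k = 1}^{n} \left(\frac{1}{a^k} + \frac{1}{(p - a)^k} + \frac{k H_{k - 1}}{a^k}\right) \frac{p^k}{k^2} = \sum_{k = 1}^{n} \frac{H_k}{k}\, x^k + \sum_{k = 1}^{n} \frac{y^k}{k^2} .$$
The first step extracts from \eqref{eq1} a combinatorial coefficient identity. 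Combining \eqref{eq1} with the Cauchy-product expansion $\frac{1}{2} \log^2(1 - X) = \sum_{k \geq 1} \frac{H_{k - 1}}{k} X^k$ (which comes from $\sum_{j = 1}^{m - 1} \frac{1}{j(m - j)} = \frac{2 H_{m - 1}}{m}$) gives the formal power series identity $\dilog\!\left(\frac{X}{X - 1}\right) = - \dilog(X) - \frac{1}{2} \log^2(1 - X) = - \sum_{k \geq 1} \frac{H_k}{k} X^k$. Expanding the left-hand side through $\left(\frac{X}{X - 1}\right)^k = (-1)^k X^k (1 - X)^{-k} = (-1)^k \sum_{m \geq k} \binom{m - 1}{m - k} X^m$ and comparing the coefficient of $X^m$ on both sides yields
$$\sum_{k = 1}^{m} \frac{(-1)^k}{k^2} \binom{m - 1}{m - k} = - \frac{H_m}{m} \qquad (\forall\, m \geq 1) .$$

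The second step feeds this back into $S_n$. Since $\vartheta_p(x) = 1 > 0$, the $p$-adic expansion $\frac{y^k}{k^2} = \frac{(-1)^k}{k^2} x^k (1 - x)^{-k} = \sum_{m \geq k} \frac{(-1)^k}{k^2} \binom{m - 1}{m - k} x^m$ converges in $\Z_p$, and reordering the finitely many (each convergent) series so obtained gives
$$\sum_{k = 1}^{n} \frac{y^k}{k^2} = \sum_{m \geq 1} x^m \sum_{k = 1}^{\min(m , n)} \frac{(-1)^k}{k^2} \binom{m - 1}{m - k} .$$
For $m \leq n$ the inner sum equals $- \frac{H_m}{m}$ by the identity just proved; hence, upon adding $\sum_{k = 1}^{n} \frac{H_k}{k} x^k$, all the terms with $m \leq n$ cancel and we are left with
$$S_n = \sum_{m > n} x^m \sum_{k = 1}^{n} \frac{(-1)^k}{k^2} \binom{m - 1}{m - k} = \sum_{k = 1}^{n} \frac{(-1)^k}{k^2} \left(\sum_{m > n} \binom{m - 1}{m - k}\, x^m\right) .$$
Now the estimate is immediate: the binomial coefficients are rational integers, so $\vartheta_p\!\left(\binom{m - 1}{m - k} x^m\right) \geq \vartheta_p(x^m) = m$, whence each inner sum $\sum_{m > n} \binom{m - 1}{m - k} x^m$ has $p$-adic valuation $\geq n + 1$; and for $1 \leq k \leq n$ one has $\vartheta_p(1/k^2) = - 2 \vartheta_p(k) \geq - 2 \lfloor \log_p(n) \rfloor$, because $\vartheta_p(k)$ is an integer with $\vartheta_p(k) \leq \log_p(k) \leq \log_p(n)$. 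By the ultrametric inequality, $\vartheta_p(S_n) \geq (n + 1) - 2 \lfloor \log_p(n) \rfloor$, which is the claimed bound.

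The point that genuinely requires care — and the reason the naive attack fails — is precisely this regrouping. Writing instead $S_n = - \bigl(\sum_{k > n} \frac{H_k}{k} x^k + \sum_{k > n} \frac{y^k}{k^2}\bigr)$, which follows from the $p$-adic validity of \eqref{eq1} at $X = x$, and then bounding term by term, falls short by a bounded amount: when $k$ ranges over the multiples of large powers of $p$ exceeding $n$, the factors $\frac{1}{k^2}$ ruin the estimate and are rescued only by a $p$-adic cancellation between the two tails. The computation above turns that cancellation into an exact identity and confines the harmful denominators $\frac{1}{k^2}$ to the range $k \leq n$, where the bound $\vartheta_p(k) \leq \lfloor \log_p(n) \rfloor$ is available. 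What remains are routine verifications: that the analytic functional equation \eqref{eq1}, valid near $0$, gives the stated formal power series identity — a power series vanishing on a neighbourhood of $0$ is the zero series — and hence the coefficient identity; and that the $p$-adic reordering of the double sum is legitimate, its general term tending to $0$ since $\vartheta_p(x) > 0$.
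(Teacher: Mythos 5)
Your proof is correct, and it reaches the bound by a genuinely different mechanism from the paper's, even though both arguments draw their cancellation from the same functional equation \eqref{eq1}. The paper never leaves the formal, rational-function world: it observes that the $n$-th Taylor polynomial at $0$ of $g(X) = \dilog(X/a) + \dilog\left(\frac{X}{X - a}\right) + \frac{1}{2}\log^2(1 - X/a)$ vanishes, concludes that the rational function $R_n(X)$ (the sum of the three truncations) has a zero of multiplicity at least $n + 1$ at the origin, writes $R_n(X) = X^{n + 1} U_n(X) / \bigl(a^n (X - a)^n \lcm(1, \dots, n)^2\bigr)$ with $U_n \in \Z[X]$, and only then substitutes $X = p$ and takes valuations. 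You instead substitute first, setting $x = p/a$, and work entirely inside $\Q_p$: you extract from \eqref{eq1} the explicit coefficient identity $\sum_{k = 1}^{m} \frac{(-1)^k}{k^2}\binom{m - 1}{m - k} = -\frac{H_m}{m}$, re-expand each $y^k/k^2$ as a convergent $p$-adic series in $x$, and check that all terms of degree at most $n$ cancel exactly, leaving a tail whose valuation is transparently at least $n + 1 - 2\lfloor\log_p(n)\rfloor$. Your route costs you the convergence and rearrangement justifications (routine here, since $\vartheta_p(x) = 1 > 0$ and the sums over $k$ are finite, and you do address them), which the paper's formal argument avoids entirely; in exchange it makes the provenance of each piece of the bound visible --- the $n + 1$ comes from $\vartheta_p(x^m) = m > n$ in the tail, and the $-2\lfloor\log_p(n)\rfloor$ comes only from the denominators $k^2$ with $k \leq n$ --- and it is close in spirit to the $p$-adic-dilogarithm proof alluded to in the paper's final remark, while remaining elementary. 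The individual steps all check out: the reduction $\frac{1}{k^2} + \frac{H_{k - 1}}{k} = \frac{H_k}{k}$, the expansion $\frac{1}{2}\log^2(1 - X) = \sum_{k \geq 1} \frac{H_{k - 1}}{k} X^k$, the binomial expansion of $\left(\frac{X}{X - 1}\right)^k$, and the closing ultrametric estimates are correct.
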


\begin{proof}
By substituting into Equation \eqref{eq1} $X$ by $\frac{X}{a}$, we get
\begin{equation}\label{eq2}
\dilog\left(\frac{X}{a}\right) + \dilog\left(\frac{X}{X - a}\right) + \frac{1}{2} \log^2\left(1 - \frac{X}{a}\right) = 0 .
\end{equation}
Now, let $n \in \N$. Since the $n$\up{th} degree Taylor polynomials of the two functions $t \mapsto \dilog(t)$ and $t \mapsto \frac{1}{2} \log^2(1 - t)$ at $0$ are respectively $\sum_{k = 1}^{n} \frac{t^k}{k^2}$ and $\sum_{k = 1}^{n} \frac{H_{k - 1}}{k} t^k$ and since the functions $t \mapsto \frac{t}{a}$ and $t \mapsto \frac{t}{t - a}$ both vanish at $0$ then (according to the well-known properties of Taylor polynomials) the $n$\up{th} degree Taylor polynomial of the function $X \stackrel{g}{\longmapsto} \dilog\left(\frac{X}{a}\right) + \dilog\left(\frac{X}{X - a}\right) + \frac{1}{2} \log^2\left(1 - \frac{X}{a}\right)$ is the same with the $n$\up{th} degree Taylor polynomial of the rational function
$$
R_n(X) := \sum_{k = 1}^{n} \frac{(\frac{X}{a})^k}{k^2} + \sum_{k = 1}^{n} \frac{(\frac{X}{X - a})^k}{k^2} + \sum_{k = 1}^{n} \frac{H_{k - 1}}{k} \left(\frac{X}{a}\right)^k .
$$
But on the other hand, in view of \eqref{eq2}, the $n$\up{th} degree Taylor polynomial of $g$ at $0$ is zero. Comparing these two results, we deduce that the multiplicity of $0$ in $R_n$ is at least $(n + 1)$. Consequently, $R_n(X)$ can be written as:
$$
R_n(X) = X^{n + 1} \cdot \frac{U_n(X)}{a^n (X - a)^n \lcm(1 , 2 , \dots , n)^2} ,
$$
where $U_n \in \Z[X]$. In particular, we have
$$
R_n(p) = p^{n + 1} \cdot \frac{U_n(p)}{a^n (p - a)^n \lcm(1 , 2 , \dots , n)^2} .
$$
Next, because $U_n(p) \in \Z$ (since $U_n \in \Z[X]$) and $a$ is not a multiple of $p$, then by taking the $p$-adic valuations in the two sides of the last identity, we derive that:
$$
\vartheta_p\left(R_n(p)\right) \geq n + 1 - 2 \vartheta_p\left(\lcm(1 , 2 , \dots , n)\right) = n + 1 - 2 \left\lfloor\log_p(n)\right\rfloor ,
$$
as required. This achieves the proof.
\end{proof}

By taking $(p , a) = (2 , 1)$ in Theorem \ref{t1}, we derive the following important corollary from which we will deduce the integrality of a certain rational sequence.

\begin{coll}\label{coll1}
For all positive integer $n$, we have
\begin{equation}
\vartheta_2\left(\sum_{k = 1}^{n} \left(2 + k H_{k - 1}\right) \frac{2^k}{k^2}\right) \geq n + 1 - 2 \left\lfloor\log_2(n)\right\rfloor . \tag*{$\square$}
\end{equation}
\end{coll}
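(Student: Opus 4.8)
The plan is to obtain Corollary~\ref{coll1} as the immediate specialization of Theorem~\ref{t1} to the case $(p,a) = (2,1)$, so the whole argument amounts to checking that each ingredient of the general statement collapses to the asserted form. First I would observe that the hypotheses of Theorem~\ref{t1} are satisfied for this choice: $2$ is prime and $a = 1$ is visibly not a multiple of $2$. Hence, applying the theorem, for every positive integer $n$ we have
$$
\vartheta_2\!\left(\sum_{k=1}^{n}\left(\frac{1}{1^k} + \frac{1}{(2-1)^k} + \frac{k H_{k-1}}{1^k}\right)\frac{2^k}{k^2}\right) \geq n + 1 - 2\left\lfloor\log_2(n)\right\rfloor .
$$

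Next I would simplify the left-hand side. With $a = 1$ we have $p - a = 1$ as well, so $\frac{1}{a^k} = \frac{1}{(p-a)^k} = 1$ and $\frac{k H_{k-1}}{a^k} = k H_{k-1}$; consequently the bracketed expression equals $2 + k H_{k-1}$, while $\frac{p^k}{k^2} = \frac{2^k}{k^2}$. On the right-hand side, $\left\lfloor\log_p(n)\right\rfloor = \left\lfloor\log_2(n)\right\rfloor$. Substituting these identifications into the displayed inequality produces exactly the claimed bound, which finishes the proof.

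There is no real obstacle here; the only points worth a moment's attention are that $a = 1$ is genuinely admissible (i.e.\ $a \not\equiv 0 \pmod{p}$ for $p = 2$, which is trivial) and that the two terms $\frac{1}{a^k}$ and $\frac{1}{(p-a)^k}$ coincide rather than cancel, so that the additive constant inside the bracket is $2$ and not $0$. With these checked, the corollary is immediate.
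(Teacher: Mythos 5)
Your proposal is correct and follows exactly the paper's route: the corollary is obtained by specializing Theorem~\ref{t1} to $(p,a)=(2,1)$ and simplifying $\frac{1}{1^k}+\frac{1}{(2-1)^k}+kH_{k-1}$ to $2+kH_{k-1}$. The verification that $a=1$ is admissible and that the two unit terms add to $2$ is all that is needed, as you note.
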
 

As an application of Corollary \ref{coll1}, we obtain the integrality of a particular rational sequence, which cannot be seen directly from its original expression.

\begin{thm}\label{t2}
For every $n \in \N \setminus \{3 , 5 , 7\}$, the rational number
$$
\frac{n!^2}{4^n} \sum_{k = 1}^{n} \left(2 + k H_{k - 1}\right) \frac{2^k}{k^2}
$$
is in fact a positive integer.
\end{thm}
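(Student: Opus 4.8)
The plan is to verify that $T_n := \dfrac{n!^2}{4^n}\sum_{k=1}^{n}\bigl(2+kH_{k-1}\bigr)\dfrac{2^k}{k^2}$ satisfies $\vartheta_q(T_n)\ge 0$ for every prime $q$; positivity of $T_n$ is immediate since every summand is positive, so this will give $T_n\in\N$. I expect the whole difficulty to be bookkeeping: the odd primes will be free, the prime $2$ will be governed by Corollary~\ref{coll1} together with an elementary inequality, and a handful of small $n$ will have to be checked by hand.

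Write $S_n:=\sum_{k=1}^{n}(2+kH_{k-1})\frac{2^k}{k^2}$ and $L:=\lcm(1,2,\dots,n)$. First I would rewrite each summand in a form with a transparent denominator: since $kH_{k-1}=\sum_{j=1}^{k-1}\frac{k}{j}$, one has $(2+kH_{k-1})\frac{2^k}{k^2}=\frac{2^{k+1}}{k^2}+\sum_{j=1}^{k-1}\frac{2^k}{kj}$, so multiplying $S_n$ by $L^2$ turns every term into an integer (because $\frac{L}{k},\frac{L}{j}\in\Z$ for $k,j\le n$); hence $L^2S_n\in\Z$. As $L\mid n!$, this yields $n!^2S_n=\bigl(\tfrac{n!}{L}\bigr)^2\cdot\bigl(L^2S_n\bigr)\in\Z$. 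Since $4^n$ is a power of $2$, we get $\vartheta_q(T_n)=\vartheta_q(n!^2S_n)\ge 0$ for every odd prime $q$, so only $q=2$ remains.

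For $q=2$, Legendre's formula \eqref{eq4} gives $\vartheta_2(n!)=n-s_2(n)$, so $\vartheta_2(T_n)=2\vartheta_2(n!)+\vartheta_2(S_n)-2n=\vartheta_2(S_n)-2s_2(n)$, and by Corollary~\ref{coll1} this is at least $n+1-2\lfloor\log_2 n\rfloor-2s_2(n)$. Thus it suffices to prove
$$
n+1-2\lfloor\log_2 n\rfloor\ \ge\ 2\,s_2(n).
$$
Putting $\lambda:=\lfloor\log_2 n\rfloor$ and $s:=s_2(n)$, we have $1\le s\le\lambda+1$ and $n\ge 2^\lambda+2^{s-1}-1$ (the smallest integer with these two characteristics), so the displayed inequality follows once $(2^\lambda-2\lambda)+(2^{s-1}-2s)\ge 0$; and indeed $2^\lambda-2\lambda\ge 2$ for $\lambda\ge 3$ while $2^{s-1}-2s\ge -2$ for every $s\ge 1$. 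Hence the inequality holds for all $n\ge 8$ (and trivially for $n=1$), so $\vartheta_2(T_n)\ge 0$ there, and combining with the previous paragraph, $T_n\in\N$ for $n=1$ and for every $n\ge 8$.

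Finally I would dispose of $n\in\{2,3,4,5,6,7\}$ by direct evaluation: one finds $T_2=2$, $T_4=52$, $T_6=8469\in\N$, whereas $T_3=\frac{31}{4}$, $T_5=\frac{1097}{2}$, $T_7=\frac{725211}{4}\notin\Z$, which is exactly why these three values are excluded. Together with the general case this proves the theorem. The only slightly delicate points are the inequality above and the determination of the precise exceptional set: the bound of Corollary~\ref{coll1} is genuinely too weak on $\{2,\dots,7\}$, and one must separate the three $n$ where integrality really breaks down from the three where it happens to survive.
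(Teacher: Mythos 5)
Your proposal is correct and follows essentially the same route as the paper: reduce to checking $\vartheta_2\ge 0$ via Legendre's formula and Corollary \ref{coll1}, prove the key inequality $s_2(n)+\lfloor\log_2(n)\rfloor\le\frac{n+1}{2}$ for $n\ge 8$, and verify the small cases by hand (your numerical values $T_2=2$, $T_4=52$, $T_6=8469$, $T_3=\frac{31}{4}$, $T_5=\frac{1097}{2}$ all check out). Your two refinements --- justifying $n!^2 S_n\in\Z$ via $\lcm(1,\dots,n)^2 S_n\in\Z$ rather than asserting it, and proving the inequality from the minimality bound $n\ge 2^{\lambda}+2^{s-1}-1$ instead of hand-checking $8\le n\le 16$ --- are valid but do not change the substance of the argument.
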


The proof of Theorem \ref{t2} needs the following lemma:

\begin{lemma}\label{l1}
For all integer $n \geq 8$, we have
$$
s_2(n) + \lfloor\log_2(n)\rfloor \leq \frac{n + 1}{2} .
$$
\end{lemma}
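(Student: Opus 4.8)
The inequality to establish is $s_2(n) + \lfloor\log_2(n)\rfloor \leq \frac{n+1}{2}$ for all integers $n \geq 8$. The plan is to control the two quantities on the left separately by their trivial extremal sizes and then check that the sum is small compared to $\frac{n+1}{2}$ for all sufficiently large $n$, leaving a finite range to be verified by hand. Write $m := \lfloor\log_2(n)\rfloor$, so that $2^m \leq n < 2^{m+1}$, i.e. $n$ has exactly $m+1$ binary digits. Then $s_2(n) \leq m+1$ (the number of binary digits), and of course $\lfloor\log_2(n)\rfloor = m$. Hence the left-hand side is at most $2m+1$. On the other hand $n \geq 2^m$, so $\frac{n+1}{2} \geq \frac{2^m+1}{2} = 2^{m-1} + \frac12$. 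Therefore it suffices to show $2m+1 \leq 2^{m-1} + \frac12$, equivalently $2^{m-1} \geq 2m + \frac12$, equivalently $2^m \geq 4m+1$. This last inequality holds for all integers $m \geq 5$ (check $m=5$: $32 \geq 21$; then induct, since doubling the left side beats adding $4$ to the right side). So the bound is automatic once $\lfloor\log_2(n)\rfloor \geq 5$, i.e. for all $n \geq 32$.

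It remains to treat $8 \leq n \leq 31$, i.e. the cases $m = 3$ ($8 \leq n \leq 15$) and $m = 4$ ($16 \leq n \leq 31$). Here I would not use the crude bound $s_2(n) \leq m+1$ but simply verify the inequality directly. For $m = 3$ we need $s_2(n) + 3 \leq \frac{n+1}{2}$, i.e. $s_2(n) \leq \frac{n-5}{2}$; the worst case is $n = 8$ with $s_2(8) = 1$ and $\frac{n-5}{2} = \frac32$, which is fine, and the right-hand side only grows as $n$ increases while $s_2(n) \leq 4$ throughout the block, with $s_2(n)=4$ only at $n=15$ where $\frac{n-5}{2}=5$. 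For $m = 4$ we need $s_2(n) + 4 \leq \frac{n+1}{2}$, i.e. $s_2(n) \leq \frac{n-7}{2}$; here $n \geq 16$ gives $\frac{n-7}{2} \geq \frac92$, while $s_2(n) \leq 5$ with equality only at $n = 31$, where $\frac{n-7}{2} = 12$. So both finite blocks are clear by a short tabulation (or by noting $s_2(n)$ never exceeds $m+1 \leq 5$ while $\frac{n+1}{2}$ already exceeds $2m+1$ by the first-case arithmetic once $n$ is at the top of each block, and checking the bottom endpoints $n = 8, 16$ by hand).

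The only mildly delicate point is the boundary between the "automatic" range and the finite range: the estimate $2m+1 \leq 2^{m-1}+\tfrac12$ fails precisely for $m \leq 4$, so one must be careful not to claim it for $n$ with only four binary digits. I expect the main (and really the only) obstacle to be organizing this split cleanly — there is no deep content, just the elementary exponential-beats-linear fact $2^m \geq 4m+1$ for $m \geq 5$ together with the finite check for $8 \leq n \leq 31$. A slightly slicker alternative that avoids any case analysis: use the sharper bound $s_2(n) + \lfloor\log_2 n\rfloor \leq s_2(n) + \log_2 n$ and observe that $\frac{n+1}{2} - \log_2 n - s_2(n)$ is increasing in $n$ for large $n$ while already nonnegative at $n = 8$, but turning that into a rigorous argument still requires pinning down where monotonicity kicks in, so the explicit split above is cleanest.
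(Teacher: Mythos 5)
Your argument is correct and follows essentially the same route as the paper: bound the left-hand side by $2\lfloor\log_2 n\rfloor+1$ using $s_2(n)\leq \lfloor\log_2 n\rfloor+1$, dispose of large $n$ via the elementary inequality $2m+1\leq 2^{m-1}$ (exponential beats linear), and finish the remaining small cases by direct verification. The only difference is bookkeeping: the paper hand-checks $8\leq n\leq 16$ and handles $k=4$ (i.e.\ $17\leq n\leq 31$) with the observation $2k+1=9\leq\frac{n+1}{2}$, whereas you extend the finite check to $8\leq n\leq 31$; both splits are equally valid.
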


\begin{proof}
For $n \in \{8 , 9 , \dots , 16\}$, we verify the required inequality of the lemma by hand. Take for the sequel $n \geq 17$ and let $n = \overline{a_k a_{k - 1} \dots a_0}_{(2)} = a_0 + 2 a_1 + 2^2 a_2 + \dots + 2^k a_k$ be the representation of $n$ in the binary system (with $k \in \N_0$, $a_0 , a_1 , \dots , a_{k - 1} \in \{0 , 1\}$ and $a_k = 1$). Then, we have $s_2(n) = a_0 + a_1 + \dots + a_k \leq k + 1$ and $\lfloor\log_2(n)\rfloor = k$; so $s_2(n) + \lfloor\log_2(n)\rfloor \leq 2 k + 1$. Next, since $n \geq 17$ then $k \geq 4$. We now distinguish the two following cases: \\
\textbullet{} \uline{1\up{st} case:} (If $k = 4$). In this case, we have that:
$$
s_2(n) + \left\lfloor\log_2(n)\right\rfloor \leq 2 k + 1 = 9 = \frac{17 + 1}{2} \leq \frac{n + 1}{2} ,
$$
as required. \\
\textbullet{} \uline{2\up{nd} case:} (If $k \geq 5$). In this case, we have that:
$$
s_2(n) + \left\lfloor\log_2(n)\right\rfloor \leq 2 k + 1 \leq 2^{k - 1} \leq \frac{n}{2} \leq \frac{n + 1}{2} ,
$$
as required. This completes the proof of the lemma.
\end{proof}

\begin{proof}[Proof of Theorem \ref{t2}]
For $n \in \{1 , 2 , 4 , 6\}$, we verify the required result by hand. Take for the sequel $n \geq 8$. Since we have obviously
$$
n!^2 \sum_{k = 1}^{n} \left(2 + k H_{k - 1}\right) \frac{2^k}{k^2} \in \N
$$
then we have just to show that:
$$
\vartheta_2\left(\frac{n!^2}{4^n} \sum_{k = 1}^{n} \left(2 + k H_{k - 1}\right) \frac{2^k}{k^2}\right) \geq 0 .
$$
By using Legendre's formula \eqref{eq4} for $p = 2$ together with Corollary \ref{coll1}, we have that:
\begin{align*}
\vartheta_2\left(\frac{n!^2}{4^n} \sum_{k = 1}^{n} \left(2 + k H_{k - 1}\right) \frac{2^k}{k^2}\right) & = 2 \left(n - s_2(n)\right) - 2 n + \vartheta_2\left(\sum_{k = 1}^{n} \left(2 + k H_{k - 1}\right) \frac{2^k}{k^2}\right) \\
& \geq 2 \left(n - s_2(n)\right) - 2 n + n + 1 - 2 \left\lfloor\log_2(n)\right\rfloor \\
& = n + 1 - 2 \left(s_2(n) + \left\lfloor\log_2(n)\right\rfloor\right) \\
& \geq 0 ~~~~~~~~~~ (\text{according to Lemma \ref{l1}}) ,
\end{align*}
as required. This completes the proof of Theorem \ref{t2}.
\end{proof}

From Theorem \ref{t2}, we derive the following corollary:

\begin{coll}\label{coll2}
For every $n \in \N \setminus \{3 , 5 , 7\}$, the rational number
$$
\frac{(2 n)!^2}{4^n} \sum_{k = 1}^{n} \frac{2 + (n + k) H_{n + k - 1}}{(n + k)^2 2^{n - k}}
$$
is in fact a positive integer.
\end{coll}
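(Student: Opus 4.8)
The plan is to reduce Corollary \ref{coll2} to two applications of Theorem \ref{t2}. Write $S(m) := \sum_{k=1}^{m}\left(2 + kH_{k-1}\right)\frac{2^k}{k^2}$ for $m \in \N$, so that Theorem \ref{t2} asserts $\frac{m!^2}{4^m}\,S(m) \in \N$ whenever $m \in \N\setminus\{3,5,7\}$. First I would rewrite the inner sum of Corollary \ref{coll2} by the change of index $j = n+k$: since $n - k = 2n - j$ and $\frac{1}{2^{2n-j}} = \frac{2^j}{4^n}$, one gets
$$
\sum_{k=1}^{n}\frac{2 + (n+k)H_{n+k-1}}{(n+k)^2\, 2^{n-k}} \;=\; \frac{1}{4^n}\sum_{j=n+1}^{2n}\left(2 + jH_{j-1}\right)\frac{2^j}{j^2} \;=\; \frac{1}{4^n}\bigl(S(2n) - S(n)\bigr).
$$
Hence the rational number in the statement equals $\dfrac{(2n)!^2}{4^{2n}}\,S(2n) - \dfrac{(2n)!^2}{4^{2n}}\,S(n)$, and it suffices to show that each of these two terms is an element of $\N$ for $n \in \N\setminus\{3,5,7\}$ and that their difference is positive.

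For the first term I would apply Theorem \ref{t2} with $n$ replaced by $2n$: as $2n$ is even it never belongs to $\{3,5,7\}$, so $\frac{(2n)!^2}{4^{2n}}S(2n) \in \N$ for every $n \in \N$. For the second term I would use the factorisation
$$
\frac{(2n)!^2}{4^{2n}}\,S(n) \;=\; \left(\frac{(2n)!}{2^n\, n!}\right)^{\!2}\cdot \frac{n!^2}{4^n}\,S(n),
$$
where $\frac{(2n)!}{2^n n!} = 1\cdot 3\cdot 5\cdots(2n-1)$ is an integer (indeed $(2n)! = 2^n n!\cdot(2n-1)!!$), and $\frac{n!^2}{4^n}S(n) \in \N$ by Theorem \ref{t2}, precisely here using the hypothesis $n \notin \{3,5,7\}$. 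Thus this term too lies in $\N$.

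It then follows that $\frac{(2n)!^2}{4^n}\sum_{k=1}^n\frac{2+(n+k)H_{n+k-1}}{(n+k)^2 2^{n-k}}$ is a difference of two positive integers, hence an integer; and since every summand is strictly positive and $(2n)!^2/4^n > 0$, this integer is positive, which is the assertion of Corollary \ref{coll2}.

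I do not expect any genuine obstacle. The only substantive ingredients are the telescoping identity $T(n) = \frac{1}{4^n}(S(2n)-S(n))$ obtained by reindexing, and the elementary divisibility $2^n n! \mid (2n)!$; the step requiring a little care is simply keeping track of the powers of $2$ when passing between $\frac{(2n)!^2}{4^{2n}}$, $\frac{n!^2}{4^n}$ and the square of the double factorial, after which everything is immediate from Theorem \ref{t2}.
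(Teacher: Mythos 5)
Your proposal is correct and follows essentially the same route as the paper: reindex the sum to obtain $\frac{(2n)!^2}{4^{2n}}(S(2n)-S(n))$, write this as $u_{2n}-\bigl(\frac{(2n)!}{2^n n!}\bigr)^2 u_n$, and invoke Theorem \ref{t2} twice together with the integrality of the odd double factorial. Your explicit observation that $2n$ is even and hence never in $\{3,5,7\}$ is a small point the paper leaves implicit, but the argument is the same.
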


\begin{proof}
Let $n \in \N \setminus \{3 , 5 , 7\}$ and set
$$
u_n := \frac{n!^2}{4^n} \sum_{k = 1}^{n} \left(2 + k H_{k - 1}\right) \frac{2^k}{k^2} .
$$
Then we have
\begin{align*}
\frac{(2 n)!^2}{4^n} \sum_{k = 1}^{n} \frac{2 + (n + k) H_{n + k - 1}}{(n + k)^2 2^{n - k}} & = \frac{(2 n)!^2}{4^{2 n}} \sum_{k = 1}^{n} \left(2 + (n + k) H_{n + k - 1}\right) \frac{2^{n + k}}{(n + k)^2} \\[2mm]
& = \frac{(2 n)!^2}{4^{2 n}} \sum_{k = n + 1}^{2 n} \left(2 + k H_{k - 1}\right) \frac{2^k}{k^2} \\[2mm]
& = \frac{(2 n)!^2}{4^{2 n}} \left[\sum_{k = 1}^{2 n} \left(2 + k H_{k - 1}\right) \frac{2^k}{k^2} - \sum_{k = 1}^{n} \left(2 + k H_{k - 1}\right) \frac{2^k}{k^2}\right] \\[2mm]
& = \frac{(2 n)!^2}{4^{2 n}} \sum_{k = 1}^{2 n} \left(2 + k H_{k - 1}\right) \frac{2^k}{k^2} - \frac{(2 n)!^2}{4^n n!^2} \cdot \frac{n!^2}{4^n} \sum_{k = 1}^{n} \left(2 + k H_{k - 1}\right) \frac{2^k}{k^2} \\[2mm]
& = u_{2 n} - \left(\frac{(2 n)!}{2^n n!}\right)^2 u_n .
\end{align*}
But since $u_n , u_{2 n} \in \Z$ (according to Theorem \ref{t2}) and $\frac{(2 n)!}{2^n n!} = \frac{1 \times 2 \times \cdots \times (2 n)}{2 \times 4 \times \cdots \times (2 n)} = 1 \times 3 \times 5 \times \cdots \times (2 n - 1) \in \Z$, we have that $u_{2 n} - \left(\frac{(2 n)!}{2^n n!}\right)^2 u_n \in \Z$. The required result of the corollary then follows.
\end{proof}

\begin{rmks}~
\begin{enumerate}
\item Using Theorem 2.2 of \cite{far3}, we can easily verify that the lower bound of Theorem \ref{t1} is essentially optimal.
\item The rational sequence introduced in Theorem \ref{t2} can be alternatively defined by the recurrence:
$$
\left\{\begin{array}{l}
u_0 = 0 \\
u_n = \frac{n^2}{4} u_{n - 1} + \frac{(n - 1)!^2}{2^n} \left(2 + n H_{n - 1}\right) ~~~~ (\forall n \geq 1)
\end{array}
\right. .
$$ 
Similarly, the rational sequence introduced in Corollary \ref{coll2} can be alternatively defined by the recurrence:
$$
\left\{\begin{array}{l}
v_0 = 0 \\
v_n = \frac{n^2 (2 n - 1)^2}{4} v_{n - 1} + K_n ~~~~ (\forall n \geq 1)
\end{array}
\right. ,
$$
where ${(K_n)}_{n}$ is a sequence of rational numbers having a closed form in terms of harmonic numbers. 
\item Using some more complicated functional equations of polylogarithms (such as Equation (6.108) of \cite[page 178]{lew}), we can give other results similar to Theorem \ref{t1} and then establish the integrality of some other rational sequences similar to those of Theorem \ref{t2} and Corollary \ref{coll2}.
\item It is also possible to prove Theorem \ref{t1} by means of the $p$-adic dilogarithm function together with Theorem 2.2 of \cite{far3} (this non elementary method is detailed in \cite{far3} for the $p$-adic logarithm function).
\end{enumerate}
\end{rmks}

\rhead{\textcolor{OrangeRed3}{\it References}}

\end{document}